	\newtheorem{theorem}{Theorem}[section]
	\newtheorem{lemma}{Lemma}[section]
	\newtheorem{algorithm}{Algorithm}[section]
	\newtheorem*{hfs}{Algorithm HFS}
	\newtheorem*{aplusatimesdata}{Algorithm $A^+AB$}
	\newtheorem*{aaplustimesdata}{Algorithm $AA^+B$}
	\theoremstyle{definition}
	\theoremstyle{remark}
	\numberwithin{equation}{section}
	\DeclareSymbolFont{boldoperators}{OT1}{cmr}{bx}{n}
	\edef\bar{\unexpanded{\protect\mathaccentV{bar}}\number\symboldoperators16}
\begin{document}


%
%
%
%
\def\put#1 at (#2,#3){\vbox to 0pt{\kern#3
                     \dimen2=#2
                     \hbox{\kern\dimen2{#1}}\vss}\nointerlineskip}

\def\dim{\hbox{Dim}}
\def\rank{\hbox{rank}}
\def\ret{\hfill \cr}
\def\qu#1{\hskip #1em\relax}
\def\Cal{\mathcal}
%
%
\def\bbc{\Bbb C}
%
%
\def\bbr{\Bbb R}
%
%
\def\intersect{\mathop{\cap}\displaylimits}
%
%
\def\sinc{\hbox{sinc}}
%
%
\def\twoprime{{\prime\prime}}
%
%
\def\threeprime{{\prime\prime\prime}}
%
%
\def\union{\mathop{\cup}\displaylimits}
%
%
\def\tten#1{$\times{10}^{#1}\qu{1}$}
%
%

\def\tcap#1#2{
\vskip -25pt
{\par\centering \small {\bf Table #1} #2\par}
\vskip 20pt
}
\def\tcapns#1#2{
{\par\centering \small {\bf Table #1} #2\par}
}


\def\smartqed{}

\newcommand{\sbt[1]}{\,\begin{picture}(-1,1)(-1,-#1)\circle*{#1}\end{picture}\ }

\newcommand{\abs}[1]{\lvert#1\rvert}

\newcommand{\blankbox}[2]{%
  \parbox{\columnwidth}{\centering
    \setlength{\fboxsep}{0pt}%
    \fbox{\raisebox{0pt}[#2]{\hspace{#1}}}%
  }%
}

\smartqed

\title{Compact Schemes for $A^+B$, $A^+AB$ and $AA^+B$}

\author{Marc Stromberg\break{\sl email: mstromberg@psmfc.org}}

\maketitle

\abstract{Explicit details are presented for calculation of $A^+B$, $A^+AB$ and $AA^+B$ where $A_{m\times n}$ is any nonzero matrix, $A^+$ is the Moore-Penrose pseudoinverse of $A$ and $B$ is any matrix of appropriate dimensions, where the quantities in question are found using only the storage originally allocated to the matrices $A$ and $B$ (together with some simple one dimensional indexing arrays). 

\vskip 10 pt
\noindent{\bf 2020 Mathematics Subject Classification: }  65F05, 65F45, 65F99 \hfill\break

\section{Introduction}
The calculations to be presented depend on the following theorem, which guarantees a specific rank decomposition of any nonzero $m \times n$ matrix. The proof of the theorem is given with minor modifications in Appendix A, as is a statement of the algorithm that produces the factorization of the theorem. The algorithm is typical of so-called compact schemes, in that the entries of the factors are stored by modifying those of $A$ in place. This in turn gives rise to the schemes of this paper, which are achieved by further in-place modifications of the original matrix.
\begin{theorem}
\label{th:lu1p1}\cite{ms-lufam}
Let $A_{m \times n}$ be a nonzero matrix of rank $r$. Then there are a permutation matrix $P_{m\times m}$, a lower trapezoidal matrix $L_{m\times r}$ of rank $r$ and an upper echelon matrix $U_{r\times n}$ of rank $r$ such that $PA = LU$.
\end{theorem}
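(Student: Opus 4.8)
The plan is to prove existence of the factorization constructively, by running Gaussian elimination on $A$ with row interchanges used for pivoting, and to argue by induction on the rank $r$ that the process terminates with factors of exactly the stated shapes. The inductive hypothesis is precisely the statement of the theorem applied to matrices of smaller rank. Since $A \neq 0$ we have $r \geq 1$, and the base case $r = 1$ is handled directly: the first nonzero row (brought to the top by a permutation) serves as the single row of $U$, and because every row is a multiple of it, the corresponding multipliers fill out a single column $L_{m\times 1}$.

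For the inductive (elimination) step, since $A \neq 0$ there is a leftmost nonzero column; call its index $j_1$, so that columns $1,\dots,j_1-1$ vanish identically. Choosing a nonzero entry in column $j_1$ and moving it to the top by a row transposition $P_1$, the first row of $P_1 A$ becomes the first row of $U$: it has its leading nonzero entry (pivot) in position $j_1$ with zeros to its left, which is exactly the first row of an echelon form. I would then subtract suitable multiples of this row from the rows below to annihilate the remainder of column $j_1$, storing those multipliers (together with a $1$ on the diagonal) as the first column of $L$. The key algebraic fact at this stage is that this first column of $L$ times the pivot row reproduces column $j_1$ and the pivot row of $P_1 A$, while the remaining rows, after elimination, form an $(m-1)\times n$ matrix $A'$ of rank $r-1$ whose leftmost nonzero column lies strictly to the right of $j_1$ (column $j_1$ and everything left of it having been zeroed).

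Applying the inductive hypothesis to $A'$ yields a permutation $P'$, a lower trapezoidal $L'$ of rank $r-1$, and an upper echelon $U'$ of rank $r-1$ with $P'A' = L'U'$. I would then assemble the full factors: $U$ is the pivot row stacked on top of $U'$, which is upper echelon because the first pivot sits in column $j_1$, strictly to the left of every pivot of $U'$; and $P$ combines $P_1$ with a block extension of $P'$. The column of multipliers from the elimination step, once reordered by $P'$ so as to respect the final row ordering, becomes the first column of $L$, placed to the left of $L'$ and shifted down one row. The resulting $L$ is lower trapezoidal of rank $r$ because its columns have leading entries in the strictly increasing rows $1,2,\dots,r$. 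Verifying $PA = LU$ is then a direct block computation.

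The main obstacle I anticipate is the permutation bookkeeping: the row interchanges chosen while reducing $A'$ act on rows that have already received entries in the first column of $L$, so those entries must be permuted along with the data in order to preserve the identity $PA = LU$. Making this precise---showing that conjugating the accumulated elimination operations by the later permutations reassembles into a single lower trapezoidal $L$ and a single permutation $P$---is the technical heart of the argument. A secondary point demanding care is confirming that the ranks are exactly $r$ and $r-1$ at each stage, so that no pivot is spuriously created or lost; this follows because left multiplication by invertible unit-triangular elimination matrices and by permutations preserves rank, and because the pivot row contributes a pivot in a column where $A'$ is entirely zero, forcing $\operatorname{rank}(A') = r-1$.
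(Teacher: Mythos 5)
Your proposal is correct, but it proves the theorem by a genuinely different route than the paper. You give the classical textbook argument: induction on the rank, peeling off the leftmost pivot column, eliminating below it, applying the inductive hypothesis to the resulting $(m-1)\times n$ matrix $A'$ of rank $r-1$, and reassembling the factors in block form, with the multiplier column reordered by the later permutation $P'$ (this is exactly the identity $\left(\begin{smallmatrix}1&0\\0&P'\end{smallmatrix}\right)P_1A=\left(\begin{smallmatrix}1&0\\P'm&L'\end{smallmatrix}\right)\left(\begin{smallmatrix}u_1\\U'\end{smallmatrix}\right)$, so the permutation bookkeeping you flag as the technical heart resolves cleanly); your rank accounting via the pivot sitting in a column where $A'$ vanishes is also sound. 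The paper instead proves the theorem by establishing the correctness of a specific in-place algorithm (Algorithm \ref{al:lu1p1}): the factors $L$ and $U$ are \emph{defined} from the final overwritten contents of $A$ and the index arrays $\rho$ and $\gamma$ (equations (\ref{eq:lu1p7}), (\ref{eq:lu1p8})), and Lemmas \ref{le:lu1p1} and \ref{le:lu1p2} show by induction on the column index $\ell$ that $(PA)_{i,q}=(LU)_{i,q}$ holds progressively over larger index ranges as the algorithm sweeps the columns. The trade-off is clear: your argument is shorter, more elementary, and establishes existence with minimal machinery, but it is purely an existence proof; the paper's argument is longer and tied to the bookkeeping of a particular pivoting scheme, but that is precisely what the rest of the paper needs, since every compact scheme in Sections 1.1--1.3 manipulates the in-place factored form (the arrays $\rho$, $\gamma$ and the pivot structure of $R$) whose validity is what the lemmas certify. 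Your proof could not be substituted for the paper's without severing the link between the theorem and the algorithmic infrastructure built on it.
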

The factorization of the theorem has numerous applications in the case that $A$ has reduced rank.  The compact scheme discussed here for $A^+A$ is used extensively in the computations for  \cite{ms-caqm}.  The factorization itself can be used for the construction of constraints that define the image of a polyhedron under a linear transformation, also in  \cite{ms-caqm}. The schemes we will detail involve further  in-place (post factorization) preparation of the matrix $A$ based on the fact that $A^+ =  U^*(UU^*)^{-1}(L^*L)^{-1}L^*P$, as is easily shown using properties of the pseudoinverse. 

The methods to be shown are based on modifications of the matrix $R$ which we define as the upper left  $r \times r$  submatrix of $PA$, and the submatrix of $A$ corresponding to $R$. The diagonal of $R$ consists of the pivot elements of the factor $L$. In the following we use the usual ${\cal L}{\cal D}{\cal L}^*$ variant of the Cholesky decomposition of a Hermitian positive definite matrix, expressed as such to avoid confusion with the factor $L$ of the $LU$ factorization. Sufficient detail is given for all of these schemes below that it is straightforward to convert any of them to a computer program.
\subsection{$A^+B$}
We first give the details for the calculation of $A^+B$ as outlined in \cite{ms-lufam} which will then be applied to the other two cases. We assume that the matrix $A$ has been factored according to Algorithm \ref{al:lu1p1} and that the integer arrays $\rho$ and $\gamma$ produced in the factorization have been retained. Here we assume that $B$ is any $m\times p$ matrix.

\item{\bf Step 1.}
The process of calculating $A^+B$ will overwrite portions of the factored form of $A$, so it is first necessary to calculate $C =L^*PB$. This is straightforward with, for $i=0,\dots,r-1$ and $q=0,\dots,p-1$, 
\begin{equation} C_{i, q} =\sum_{k=0}^{m-1}L_{k,i} B_{\rho_k,q}=
\sum_{k=i}^{m-1}\bar{A}_{\rho_k, \gamma_i} B_{\rho_k,q} \label{eq:ldl1p0}\end{equation}
However, it is clear that these results can be stored column by column back into a set of $r$ rows of each column of  $B$. Therefore, rather than introducing a new matrix $C$, we shall overwrite the appropriate entries of $B$ with the following assignments. Set
\begin{equation} B_{\rho_i, q} \leftarrow
\sum_{k=i}^{m-1}\bar{A}_{\rho_k, \gamma_i} B_{\rho_k,q} \label{eq:ldl1p01}\end{equation}
for $i=0,\dots,r-1$ and $q=0,\dots,p-1$. It is evident that entries to be overwritten are not needed in subsequent sums in (\ref {eq:ldl1p01}) provided the given order is followed. The entries $B_{\rho_i, q}$ will now contain $L^*PB$.

\item{\bf Step 2.}
Compute and store the lower triangular part of $L^*L$ into $A$. Explicitly, the first $r$ rows of $L$ comprise an $r\times r$ lower triangular matrix in $PA$, namely the lower triangular part of $R$. We overwrite this matrix with the lower triangular part of $L^*L$  by making the assignments \begin{equation}A_{\rho_j, \gamma_i} \leftarrow \sum_{k=j}^{m-1}A_{\rho_k, \gamma_j}\bar{A}_{\rho_k, \gamma_i} \label{eq:ldl1p1} 
\end{equation}
for $j=i,\dots,r-1$ for $i=0,\dots,r-1$. Note that these assignments do not disturb the contents of $U$ as stored in $PA$.

\item{\bf Step 3.}
Calculate $D = (L^*L)^{-1}C$. This is done by solving the system $$L^*L y^\prime = y$$ for each column $y^\prime$ of $D$ from each column $y$ of $C$. Specifically, this step is application of the following algorithm. Again, rather than introducing a new matrix $D$, we overwrite the appropriate entries of $B$. This is  an ${\cal L}{\cal D}{\cal L}^*$ decomposition (consisting of (\ref{eq:ldl1p2}),  (\ref{eq:ldl1p2a}), and  (\ref{eq:ldl1p2b})) followed by forward substitution (\ref{eq:ldl1p3}), application of the diagonal (\ref{eq:ldl1p4}), and back substitution (\ref{eq:ldl1p5}), which we state explicitly in terms of the factored contents of $PA$. Specifically, we find the ${\cal L}{\cal D}{\cal L}^*$ factorization of $L^*L$ from its lower triangular part, storing the factor ${\cal L}$ in the lower triangular $r\times r$ part of $R$ with the diagonal ${\cal D}$ overwriting the pivot elements of $L$, since the diagonal elements of ${\cal L}$ have the value $1$ and can be defined implicitly.
\begin{hfs} 
\label{al:ltfd} Let $A$ be the result of the factorization of Algorithm \ref{al:lu1p1} and let $\rho$ and $\gamma$ be the integer arrays of length at least $m$ and $n$ respectively also resulting from the factorization.
Assume that the $r \times r$ lower triangular part of $R$ has been overwritten by the lower triangular part T of an $r\times r$ Hermitian positive definite matrix $H$ (e.g., as in Step 2). 
\end{hfs}
{
\setlength{\jot}{4pt}
\begin{align}
&\hbox{for} \quad 0\leq i < r \quad \{\notag \\
&\qu{2.2}\hbox{for}  \quad 0\leq k < i \quad\{\notag\\
&\qu{2.2}\quad A_{\rho_i,\gamma_i}\leftarrow  A_{\rho_i,\gamma_i} -  A_{\rho_i,\gamma_k} \bar{A}_{\rho_i,\gamma_k} A_{\rho_k,\gamma_k}\label{eq:ldl1p2}\\
&\qu{2.2}\}\qu{3.2}\notag\\
&\qu{2.2}\hbox{for}  \quad i+1\leq j < r \quad\{\notag\\
&\qu{4.2}\hbox{for}  \quad 0\leq k < i \quad\{\notag\\
&\qu{4.2}\quad A_{\rho_j,\gamma_i}\leftarrow  A_{\rho_j,\gamma_i} -   A_{\rho_j,\gamma_k}\bar{A}_{\rho_i,\gamma_k} A_{\rho_k,\gamma_k}\label{eq:ldl1p2a}\\
&\qu{4.2}\}\qu{3.2}\notag\\
&\qu{3.2}\quad A_{\rho_j,\gamma_i}\leftarrow  A_{\rho_j,\gamma_i} /  A_{\rho_i,\gamma_i} \label{eq:ldl1p2b}\\
&\qu{2.2}\}\qu{3.2}\notag\\
&\}\notag\\
&\hbox{for} \quad 0\leq q < p \quad \{\notag \\
&\qu{2.2}\hbox{for} \quad 0\leq i < r \quad \{\notag \\
&\qu{4.2}\hbox{for}  \quad 0\leq k < i \quad\{\notag\\
&\qu{4.2}\quad B_{\rho_i,q} \leftarrow B_{\rho_i,q} -  A_{\rho_i,\gamma_k}B_{\rho_k,q} \label{eq:ldl1p3}\\
&\qu{4.2}\}\notag\\
&\qu{2.2}\}\notag\\
&\qu{2.2}\hbox{for} \quad 0\leq i < r \quad \{\notag \\
&\qu{2.2}\quad B_{\rho_i,q} \leftarrow B_{\rho_i,q} / A_{\rho_i,\gamma_i} \label{eq:ldl1p4}\\
&\qu{2.2}\}\qu{4.2}\notag\\
&\qu{2.2}\hbox{for} \quad r> i \geq 0 \quad \{\notag \\
&\qu{4.2}\hbox{for}  \quad r > k > i \quad\{\notag\\
&\qu{4.2}\quad B_{\rho_i,q} \leftarrow B_{\rho_i,q} -  \bar{A}_{\rho_k,\gamma_i}B_{\rho_k,q} \label{eq:ldl1p5}\\
&\qu{4.2}\}\qu{3.2}\notag\\
&\qu{2.2}\}\notag\\
&\}\qu{2.2}\notag
\end{align}
}
\vskip 1pt
\noindent

\item{\bf Step 4.}
Compute and store the lower triangular part of $UU^*$ into $A$, again overwriting the upper left $r\times r$ lower triangular matrix in $PA$ . We overwrite this matrix with the lower triangular part of $UU^*$  by making the assignments 
\begin{equation} A_{\rho_j, \gamma_i} \leftarrow \begin{cases} \bar{A}_{\rho_i, \gamma_j} & \hbox{ if } j > i\cr
							     1 & \hbox{ if } j = i\cr \end{cases} + \sum_{k=1+\gamma_j}^{n-1}
A_{\rho_j, k}\bar{A}_{\rho_i, k} \label{eq:ldl1p6}
\end{equation}
 for $j=i,\ldots,r-1$ for $i=0,\ldots,r-1$. 

\item{\bf Step 5.} 
Calculate $F =  (UU^*)^{-1}D$ by again applying Algorithm HFS,  where of course $F$ now consists of the entries $B_{\rho_i, q}$ for $i=0,\dots,r-1$ and $q=0,\dots,p-1$.

\item{\bf Step 6.} 
Complete the calculation of $A^+B = G = U^*F $ with the assignments
\begin{equation}G_{iq} = 
\sum_{k=0}^{r-1} B_{\rho_k,q}\begin{cases} 1 & \text{if $i = \gamma_k$}\\
\bar{A}_{\rho_k, i} & \text{if $i > \gamma_k$}\\
0 & \text{if $i < \gamma_k$}
\end{cases} \label{eq:ldl1p7}
\end{equation} for $i=0,\dots,n-1$ and $q=0,\dots,p-1$.

Calculation of the remaining quantities is for each split into two phases, namely preparation of the matrix $A$ and then application of the result to the data $B$.
\subsection{$A^+AB$}
For this calculation we again assume that the matrix $A$ has been factored according to Algorithm \ref{al:lu1p1} and that the integer arrays $\rho$ and $\gamma$ produced in the factorization have been retained. Here we assume that $B$ is any $n\times p$ matrix.
Steps 1 and 2 prepare the contents of $A$, the results of which are reusable with arbitrary $B$. Step 3 is application of $A^+A$ to $B$.
\item{\bf Step 1.}  Preparation of $A$ is continued from an initial $LU$ factorization by first replacing the upper left $r\times r$ lower triangular matrix of $PA$ with the lower triangular part of  $UU^*$ as in (\ref{eq:ldl1p6}). 

\item{\bf Step 2.}  Compute the ${\cal L}{\cal D}{\cal L}^*$ decomposition of $UU^*$ by applying only the factorization (\ref{eq:ldl1p2}), (\ref{eq:ldl1p2a}), and (\ref{eq:ldl1p2b}) of Algorithm HFS. The calculation of $A^+AB$ does not require retention of $L$ of the factorization, so the scheme is free to overwrite that portion of the factored form of $A$.  

\item{\bf Step 3.}  Use the following algorithm to apply $A^+A$ to arbitrary data $B$. Simply put, this computes $U^*(UU^*)^{-1}UB$.
\begin{aplusatimesdata} Let $A$ be the result of the factorization of Algorithm \ref{al:lu1p1} and let $\rho$ and $\gamma$ be the integer arrays of length at least $m$ and $n$ respectively also resulting from the factorization.
Assume that the upper left $r \times r$ lower triangular part of $R$ has been overwritten by the lower triangular $r\times r$ factor T of the of the ${\cal L}{\cal D}{\cal L}^*$ factored form of $UU^*$  and let $B$ be an arbitrary $n\times p$ matrix.
\end{aplusatimesdata}
First calculate $C = UB$ as follows in (\ref{eq:ldl1pc1}), then apply forward substitution (\ref{eq:ldl1pc2}), application of the diagonal (\ref{eq:ldl1pc3}) and back substitution (\ref{eq:ldl1pc4}) to obtain $D = (UU^*)^{-1}C$, and finally $A^+AB = G = U^*D$ (\ref{eq:ldl1pc5}). Again rather than introducing new matrices $C$ and $D$, we overwrite the appropriate elements of $B$.

{
\setlength{\jot}{4pt}
\begin{align}
&\hbox{for} \quad 0\leq q < p \quad \{\notag \\
&\qu{2.2}\hbox{for} \quad 0\leq i < r \quad \{\notag \\
&\qu{2.2}\quad B_{\gamma_i, q} \leftarrow B_{\gamma_i, q} + \sum_{j=\gamma_i+1}^{n-1}
A_{\rho_i, j}B_{jq} \label{eq:ldl1pc1}\\
&\qu{2.2}\}\notag\\
&\qu{2.2}\hbox{for} \quad 0\leq i < r \quad \{\notag \\
&\qu{2.2}\quad B_{\gamma_i, q} \leftarrow  B_{\gamma_i, q} - \sum_{k=0}^{i-1}A_{\rho_i, \gamma_k}B_{\gamma_k, q} \label{eq:ldl1pc2}\\
&\qu{2.2}\}\notag\\
&\qu{2.2}\hbox{for} \quad 0\leq i < r \quad \{\notag \\
&\qu{2.2}\quad  B_{\gamma_i, q} \leftarrow   B_{\gamma_i, q}/A_{\rho_i, \gamma_i} 
\label{eq:ldl1pc3}\\
&\qu{2.2}\}\notag\\
&\qu{2.2}\hbox{for} \quad r > i \geq 0 \quad \{\notag \\
&\qu{2.2}\quad B_{\gamma_i, q} \leftarrow B_{\gamma_i, q} - \sum_{k=i+1}^{r-1}
\bar{A}_{\rho_k, \gamma_i}B_{\gamma_k, q} \label{eq:ldl1pc4}\\
&\qu{2.2}\}\notag\\
&\qu{2.2}\hbox{for} \quad  0\leq i < n \quad \{\notag \\
&\qu{2.2}\quad G_{iq} \leftarrow  \sum_{k=0}^{r-1} B_{\gamma_k, q}\begin{cases} 1 & \text{if $i = \gamma_k$}\\
\bar{A}_{\rho_k, i} & \text{if $i > \gamma_k$}\\
0 & \text{if $i < \gamma_k$}
\end{cases} \label{eq:ldl1pc5}\\
&\qu{2.2}\}\notag\\
&\}\qu{2.2}\notag
\end{align}
}

\subsection{$AA^+B$}
We again assume that the matrix $A$ has been factored according to Algorithm \ref{al:lu1p1} and that the integer arrays $\rho$ and $\gamma$ produced in the factorization have been retained. Here we assume that $B$ is any $m\times p$ matrix. Steps 1--3 below prepare the contents of $A$, and again the results are reusable with arbitrary $B$. Step 4 is the application of $AA^+$ to $B$. First, note that 
\begin{equation}AA^+ = P^*L(L^*L)^{-1}L^*P\label{eq:ldl1pc6}\end{equation} and furthermore that (\ref{eq:ldl1pc6}) remains true if $L$ is replaced by $LQ$ where $Q$ is any $r\times r$ invertible matrix. For the current application we will in particular take $Q$ to be the diagonal of reciprocals of the pivot elements of $L$, namely the diagonal matrix with entries $\{1/A_{\rho_i, \gamma_i}\}$ for $i=0,\dots,r-1$.
\item{\bf Step 1.}  Preparation of $A$ is continued from an initial $LU$ factorization by first replacing $L$ with $LQ$ in $PA$ as follows.
{
\setlength{\jot}{4pt}
\begin{align}
&\hbox{for} \quad 0\leq i < r \quad \{\notag \\
&\qu{2.2}\hbox{for} \quad i+1\leq k < m \quad \{\notag \\
&\qu{2.2}\quad A_{\rho_k, \gamma_i} \leftarrow A_{\rho_k, \gamma_i} /
A_{\rho_i, \gamma_i} \label{eq:ldl1pc7}\\
&\qu{2.2}\}\notag\\
&\}\qu{2.2}\notag
\end{align}
}
\item{\bf Step 2.} 
We will henceforth assume that $L$ refers to the result of the replacements (\ref {eq:ldl1pc7}). Since we now can assume that the pivot elements of $L$ (diagonal of $R$) all have the value $1$, we can define these implicitly and are free to overwrite them, and thus to overwrite the upper triangular part of $R$ without disturbing the contents of $L$. To this end we replace the upper triangular part of $R$ with the upper triangular part of $L^*L$ as follows. Though this is stored as an upper triangular matrix, it can be treated as a lower triangular matrix for further applications, as explained below.
{
\setlength{\jot}{4pt}
\begin{align}
&\hbox{for} \quad 0\leq i < r \quad  \{\notag \\
&\qu{2.2}\hbox{for} \quad i\leq j < r \quad \{\notag \\
&\qu{2.2}\quad A_{\rho_i, \gamma_j} \leftarrow  \sum_{k=j}^{m-1}
A^\prime_{\rho_k, \gamma_i}\bar{A}^\prime_{\rho_k, \gamma_j} 
\label{eq:ldl1pc8}\\
&\qu{2.2}\}\notag\\
&\}\qu{2.2}\notag
\end{align}
}
where we define \begin{equation}
A_{\rho_k, \gamma_q}^\prime =\begin{cases} A_{\rho_k, \gamma_q} &\text{if  $k>q$}\cr
1 &\text{if  $k=q$}\cr 0 &\text{if  $k<q$}\cr\end{cases}
\end{equation}
for $k=0,\dots,m=1$ and $q=0,\dots,r-1$.
\item{\bf Step 3.} The last step of preparation of $A$ is to factor the matrix stored in Step 2 as though it is lower triangular and storing the transpose of the lower triangular factor into the upper triangular part of $R$ as follows.	
{
\setlength{\jot}{2.5pt}
\begin{align}
&\hbox{for} \quad 0\leq i < r \quad  \{\notag \\
&\qu{2.2}\hbox{for} \quad 0\leq k < i \quad \{\notag \\
&\qu{2.2}\quad A_{\rho_i, \gamma_i} \leftarrow  A_{\rho_i, \gamma_i}-
A_{\rho_k, \gamma_i}\bar{A}_{\rho_k, \gamma_i}A_{\rho_k, \gamma_k}
\label{eq:ldl1pc9}\\
&\qu{2.2}\}\notag\\
&\qu{2.2}\hbox{for} \quad i+1\leq j < r \quad \{\notag \\
&\qu{4.2}\hbox{for}\quad 0\leq k < i\notag \\
&\qu{4.2}\quad A_{\rho_i, \gamma_j} \leftarrow  A_{\rho_i, \gamma_j}-
A_{\rho_k, \gamma_j}\bar{A}_{\rho_k, \gamma_i}A_{\rho_k, \gamma_k}
\label{eq:ldl1pc10}\\
&\qu{4.2}\}\qu{2.2}\notag\\
&\qu{4.2}A_{\rho_i, \gamma_j} \leftarrow  A_{\rho_i, \gamma_j}/
A_{\rho_i, \gamma_i}\notag \\
&\qu{2.2}\}\qu{2.2}\notag\\
&\}\qu{2.2}\notag
\end{align}
}
\item{\bf Step 4.} 
Use the following algorithm to apply $AA^+$ to arbitrary data $B$. This computes $P^*L(L^*L)^{-1}L^*PB$,  where $L$ is the result of the replacements (\ref{eq:ldl1pc7}).
\begin{aaplustimesdata} Let $A$ be the result of the factorization of Algorithm \ref{al:lu1p1} and let $\rho$ and $\gamma$ be the integer arrays of length at least $m$ and $n$ respectively also resulting from the factorization.
Assume that the $r \times r$ upper triangular part of $R$ has been overwritten by the upper triangular $r\times r$ factor T from (\ref{eq:ldl1pc9}), (\ref{eq:ldl1pc10}) of the of the ${\cal L}{\cal D}{\cal L}^*$ factored form of $L^*L$  and let $B$ be an arbitrary $m\times p$ matrix.
\end{aaplustimesdata}
First calculate $C = L^*PB$ as follows in (\ref{eq:ldl1pd1}), then apply forward substitution (\ref{eq:ldl1pd2}), application of the diagonal (\ref{eq:ldl1pd3}) and back substitution (\ref{eq:ldl1pd4}) to obtain $D = (L^*L)^{-1}C$, and finally $AA^+B = G =P^*LD$ (\ref{eq:ldl1pd5}). Again we merely update the elements of $B$ rather than introducing new matrices $C$ and $D$.
 {
\setlength{\jot}{4pt}
\begin{align}
&\hbox{for} \quad 0\leq q < p \quad \{\notag \\
&\qu{2.2}\hbox{for} \quad 0\leq i < r \quad \{\notag \\
&\qu{2.2}\quad B_{\rho_i, q} \leftarrow B_{\rho_i, q} + \sum_{k=i+1}^{m-1}
\bar{A}_{\rho_k, \gamma_i}B_{\rho_k,q} \label{eq:ldl1pd1}\\
&\qu{2.2}\}\notag\\
&\qu{2.2}\hbox{for} \quad 0\leq i < r \quad \{\notag \\
&\qu{2.2}\quad B_{\rho_i, q} \leftarrow B_{\rho_i, q} - \sum_{k=0}^{i-1}A_{\rho_k, \gamma_i}B_{\rho_k, q} \label{eq:ldl1pd2}\\
&\qu{2.2}\}\notag\\
&\qu{2.2}\hbox{for} \quad 0\leq i < r \quad \{\notag \\
&\qu{2.2}\quad B_{\rho_i, q} \leftarrow  B_{\rho_i, q}/A_{\rho_i, \gamma_i} 
\label{eq:ldl1pd3}\\
&\qu{2.2}\}\notag\\
&\qu{2.2}\hbox{for} \quad r > i \geq 0 \quad \{\notag \\
&\qu{2.2}\quad B_{\rho_i, q} \leftarrow  B_{\rho_i, q} - \sum_{k=i+1}^{r-1}\bar{A}_{\rho_i, \gamma_k}B_{\rho_k, q} \label{eq:ldl1pd4}\\
&\qu{2.2}\}\notag\\
&\qu{2.2}\hbox{for} \quad  0\leq i < m \quad \{\notag \\
&\qu{2.2}\quad G_{\rho_i,q} \leftarrow  \sum_{k=0}^{r-1} B_{\rho_k, q}\begin{cases} 
A_{\rho_i, \gamma_k} & \text{if $i > k$}\\
1 & \text{if $i = k$}\\
0 & \text{if $i < k$}
\end{cases} \label{eq:ldl1pd5}\\
&\qu{2.2}\}\notag\\
&\}\qu{2.2}\notag
\end{align}
}

\section{Example}
We end this discussion by giving a simple example. Computations were done in double precision, with the results displayed here to $5$ places. Starting with the rank $4$ matrix
\begin{equation}
A_{5x7} = \left[\begin{matrix} 1&2&3&4&5&6&7\cr
 7&6&5&4&3&2&1\cr
 1&2&3&4&3&2&1\cr
 1&7&1&7&1&7&1\cr
 7&1&7&1&7&1&7\cr \end{matrix}\right] \notag
\end{equation}
we find its $LU$ factorization according to Algorithm \ref{al:lu1p1}, with 
\begin{equation}
L = \left[\begin{matrix} 7&0\phantom{---}&0\phantom{---}&0\cr
1&6.14286&0\phantom{---}&0\cr
1&1.14286&2.23256&0\cr
1&1.14286&2.23256&2\cr
7&-5\phantom{----}&2.23256&2\cr
 \end{matrix}\right]\notag
\end{equation}
and
\begin{equation}
U = \left[\begin{matrix} 1&0.85714&0.71429&0.57143&0.42857&0.28571&0.14286\cr
0&1\phantom{---}&0.04651&1.04651&0.09302&1.09302&0.13953\cr
0&0\phantom{---}&1\phantom{---}&1\phantom{---}&1.10417&0.20833&0.31250\cr
0&0\phantom{---}&0\phantom{---}&0\phantom{---}&1\phantom{---}&2\phantom{---}&3\phantom{---}\cr \end{matrix}\right] \notag
\end{equation}
with row permutation $(0,1,2,3,4)\rightarrow(1,3,2,0,4)$.
Applying the scheme of 1.1, we compute $A^+B$ with $B$ as the identity matrix $I_{5x5}$, which gives
\begin{equation}
A^+ = \left[\begin{matrix} -0.02388&\phantom{-}0.08326&-0.15&\phantom{-}0.01719&\phantom{-}0.04219\cr
-0.01071&\phantom{-}0.06071&-0.10&\phantom{-}0.05833&-0.00833\cr
-0.03192&\phantom{-}0.00379&\phantom{-}0.15&-0.04323&\phantom{-}0.01510\cr
-0.01875&-0.01875&\phantom{-}0.20&-0.00208&-0.03542\cr
\phantom{-}0.00379&-0.03192&\phantom{-}0.15&-0.04323&\phantom{-}0.01510\cr
\phantom{-}0.06071&-0.01071&-0.10&\phantom{-}0.05833&-0.00833\cr 
\phantom{-}0.08326&-0.02388&-0.15&\phantom{-}0.01719&\phantom{-}0.04219\cr
\end{matrix}\right]. \notag
\end{equation} 
We next apply the scheme 1.2, computing $A^+AA^+$ taking $B=A^+$ and then scheme 1.3, computing $AA^+A$ taking $B=A$ (the original $A$). The maximum difference between entries of $A$ and $AA^+A$ and between entries of $A^+$ and $A^+AA^+$ is found to be less than $2.7\text{e-}15$. The results are identical whether using the fine, coarse or simple (with $\varepsilon = 1.0\text{e-}12$) methods from Appendix A.
\vfill\break

\appendix 
\counterwithin*{equation}{section}
\renewcommand\theequation{\thesection\arabic{equation}}
\section{$LU$ Factorization}
The proof of Theorem \ref{th:lu1p1} depends on a pair of lemmas that concern the construction of $P$, $L$ and $U$ in Algorithm \ref{al:lu1p1}. The array $\rho$ will contain the permutation information, so that $(PA)_{i, j} = A_{\rho_i, j}$. 
For the following we note that the changes to $A$ (the assignments (\ref{eq:lu1p2}), (\ref{eq:lu1p5})) take place only once for any position. 
\begin{algorithm} 
\label{al:lu1p1}
Let $\rho$ and $\gamma$ be integer arrays of length at least $m$ and $n$ respectively, and let $D$ be a real array of length $m$. Let $r$ and $p$ be integers and set $r \leftarrow 0$ and $p \leftarrow 0$. For $0 \leq i < m$ set $\rho_i \leftarrow i$ and $D_i \leftarrow \|A_{i,\cdot}\|$ where $\| \|$ is a norm on the rows of $A$. Let $R$ be an array of integers of length at least $n$.
\end{algorithm}
{
\setlength{\jot}{5pt}
\begin{align}
\hbox{for}& \quad 0\leq \ell < n \quad \{ \label{eq:lu1p0}\\
&\hbox{set}\quad C \leftarrow 0 \notag\\
&\hbox{for}\quad r\leq i < m \quad \{\label{eq:lu1p1}\\
&\qu{1.2}\hbox{if} \quad D_{\rho_i} > 0 \quad \{ \notag\\
&\qu{2.4}A_{\rho_i,\ell} \leftarrow A_{\rho_i,\ell} - \sum_{0\leq k< r}A_{\rho_i,\gamma_k}A_{\rho_k,\ell} \label{eq:lu1p2} \\
&\qu{2.4}\hbox{if} \quad C < |A_{\rho_i,\ell}|/D_{\rho_i} \quad \hbox{set} \quad C \leftarrow |A_{\rho_i,\ell}|/D_{\rho_i}, \quad p \leftarrow i \notag\\
&\qu{1.2}\}\notag\\
&\}\notag\\
&\hbox{if}  \quad C > 0 \quad \{ \label{eq:lu1p3} \\
&\qu{1.2}\hbox{set} \quad \gamma_r \leftarrow \ell, \quad \rho_p \leftrightarrow \rho_r \notag\\
&\qu{1.2}\hbox{for} \quad \ell+1\leq j < n \quad \{ \label{eq:lu1p4}\\
&\qu{2.4}A_{\rho_r,j} \leftarrow (A_{\rho_r,j}-\sum_{0\leq k<r} A_{\rho_r,\gamma_k}A_{\rho_k,j})/A_{\rho_r,\ell} \label{eq:lu1p5}\\
&\qu{1.2}\} \notag\\
&\qu{1.2}\hbox{set} \quad r \leftarrow r+1 \notag\\
&\} \notag\\
&\hbox{set} \quad R_\ell \leftarrow r-1 \label{eq:lu1p6}\\
\}\qu{.8}&\notag
\end{align}
}

\vskip 1pt
\noindent
We define $L$ and $U$ in terms of the eventual value of $A$ in Algorithm \ref{al:lu1p1} by

\begin{equation}
U_{p, q}  =
\begin{cases} 1 & \text{if $q = \gamma_p$}\\
A_{\rho_p, q} & \text{if $q > \gamma_p$}\\
0 & \text{if $q < \gamma_p$}
\end{cases}\label{eq:lu1p7}
\end{equation}
 for $p = 0,...r-1$ and $q = 0,...,n-1$ and
\begin{equation}L_{p, q}  = \begin{cases}  A_{\rho_p, \gamma_q} &
\text{ if  $p \geq q$}\\
0 &\text{ if  $p < q$}\end{cases}\label{eq:lu1p8}
\end{equation}
for $p = 0,...m-1$ and $q = 0,...,r-1$. These definitions are assumed to apply as soon as the assignments in (\ref{eq:lu1p2}) and (\ref{eq:lu1p5}) have taken place for the relevant entries of $A$.

Theorem \ref{th:lu1p1} is established by examination of the algorithm and applying the following two lemmas.
We assume the usual convention that vacuous sums are zero.
By observation, the algorithm will proceed through the outer loop (\ref{eq:lu1p0}), terminating after at most $n$ steps. The vector $R$ of the algorithm is a construct for use only in the lemmas, and can be omitted from implementations. 

\begin{lemma} 
\label{le:lu1p1} \cite{ms-lufam}
Let $\ell_0$ be the smallest value of $\ell$ for which (\ref{eq:lu1p3}) of Algorithm \ref{al:lu1p1} is satisfied. Then for each $\ell$, $\ell_0\leq \ell <n$, we have 
$0\leq R_\ell <m$ and $\gamma_{R_\ell} \leq \ell$. For any such $\ell$ let $\ell^\prime = \ell + 1$. If $R_\ell = R_{\ell^\prime}$ then $\gamma_{R_{\ell^\prime}} < \ell^\prime$ and if $R_\ell < R_{\ell^\prime}$ then 
$R_{\ell^\prime} = R_{\ell}+1$ and $\gamma_{R_{\ell^\prime}} = \ell^\prime$.
Furthermore either
$\gamma_{R_\ell} < \ell$, in which case we have
 \begin{equation}(PA)_{i,\ell} = (LU)_{i,\ell}\label{eq:lu1p9}\end{equation} for $i = R_\ell +1, \ldots, m-1$, 
 or  $\gamma_{R_\ell} = \ell$ and we have
 \begin{equation}(PA)_{i,\ell} = (LU)_{i,\ell}\label{eq:lu1p10}\end{equation} for $i = R_\ell, \ldots, m-1$ and 
 \begin{equation}(PA)_{i,j} = (LU)_{i,j}\label{eq:lu1p11}\end{equation} for $i = R_\ell$ and $j=\ell+1, \ldots, n-1$.
 In any case, 
 \begin{equation}(PA)_{i,j} = (LU)_{i,j} \label{eq:lu1p12}\end{equation} for $i=0,\ldots,m-1$ and $j=0, \ldots, \ell_0-1$.
\end{lemma}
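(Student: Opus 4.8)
The plan is to fix $\ell$ with $\ell_0 \leq \ell < n$ and read off all of the claims directly from a single pass of the outer loop (\ref{eq:lu1p0}), using as the central device the remark that each position of $A$ is assigned at most once by (\ref{eq:lu1p2}) and (\ref{eq:lu1p5}). This ``write once'' property guarantees that every entry of $A$ appearing in the definitions (\ref{eq:lu1p7})--(\ref{eq:lu1p8}) of $L$ and $U$ already holds its terminal value at the instant I invoke it, so the identities I derive for the final array are legitimate. I would dispatch the bookkeeping claims first: since $\gamma_r \leftarrow \ell$ is recorded only when a pivot is accepted at (\ref{eq:lu1p3}) and $r$ is then incremented by exactly one, the indices $\gamma_0 < \gamma_1 < \cdots$ are strictly increasing, at most one pivot is produced per column, and $R_\ell = r-1$ records the most recent pivot. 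Hence $0 \leq R_\ell < m$ (one pivot exists because $\ell \geq \ell_0$, and $r$ never exceeds $m$), $\gamma_{R_\ell} \leq \ell$, and the dichotomy for $\ell' = \ell+1$ is immediate: $R_\ell = R_{\ell'}$ exactly when no pivot is accepted at $\ell'$, forcing $\gamma_{R_{\ell'}} = \gamma_{R_\ell} \leq \ell < \ell'$, while $R_\ell < R_{\ell'}$ forces a single increment $R_{\ell'} = R_\ell + 1$ with $\gamma_{R_{\ell'}} = \ell'$.

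For the agreement identities I split on whether a pivot is accepted at column $\ell$, which is precisely the alternative $\gamma_{R_\ell} = \ell$ versus $\gamma_{R_\ell} < \ell$. The uniform tool is to expand $(LU)_{i,\ell} = \sum_p L_{i,p}U_{p,\ell}$ via (\ref{eq:lu1p7})--(\ref{eq:lu1p8}), noting that $\gamma_p < \ell$ forces $U_{p,\ell} = A_{\rho_p,\ell}$ and that these pivot-row entries were written by (\ref{eq:lu1p5}) at the earlier column $\gamma_p$ and never touched again. In the pivot case, for $i \geq R_\ell$ the sum carries a final term $L_{i,R_\ell}U_{R_\ell,\ell}$ in which $U_{R_\ell,\ell}=1$ and $L_{i,R_\ell}=A_{\rho_i,\gamma_{R_\ell}}=A_{\rho_i,\ell}$ is exactly the entry written by (\ref{eq:lu1p2}); substituting that assignment telescopes the sum against the correction $\sum_{k<r}A_{\rho_i,\gamma_k}A_{\rho_k,\ell}$ and leaves the original value $(PA)_{i,\ell}$. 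The trailing claim for $i=R_\ell$, $j>\ell$ is handled identically, now substituting (\ref{eq:lu1p5}) and clearing the pivot divisor $A_{\rho_i,\ell}$ so that the products against the already-finalized entries $A_{\rho_p,j}$ telescope back to $(PA)_{R_\ell,j}$.

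In the non-pivot case I would use that acceptance fails at (\ref{eq:lu1p3}) only when $C=0$, that is, the updated entries $A_{\rho_i,\ell}$ vanish for every row $i \geq r$ with $D_{\rho_i}>0$, while a row with $D_{\rho_i}=0$ is identically zero and is skipped by (\ref{eq:lu1p2}); in both situations the expansion of $(LU)_{i,\ell}$ collapses to $(PA)_{i,\ell}$ for $i=R_\ell+1,\dots,m-1$. Finally, the blanket claim (\ref{eq:lu1p12}) for $j<\ell_0$ follows from the minimality of $\ell_0$: with $r=0$ throughout those columns the update (\ref{eq:lu1p2}) is vacuous, so $C=0$ forces every such column to be identically zero, and since each $\gamma_p \geq \ell_0 > j$ gives $U_{p,j}=0$, both $(PA)_{i,j}$ and $(LU)_{i,j}$ vanish.

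I expect the main obstacle to be the bookkeeping around the interchange $\rho_p \leftrightarrow \rho_r$ at (\ref{eq:lu1p3}). I must verify that substituting the update equations into the expanded products remains valid after the swap, i.e.\ that the entry used as the pivot and the $L$-entries $A_{\rho_{R_\ell},\gamma_k}$ of the swapped-in row were all produced by earlier applications of (\ref{eq:lu1p2}) on that same storage row before it was promoted, so that each symbol $A_{\rho_i,\cdot}$ in the final array is correctly classified as an original value, an $L$-entry, a $U$-entry, or the pivot. Maintaining this classification consistently through the permutation, together with confirming the once-only timing of each referenced assignment, is the delicate part; the algebraic telescoping itself is routine once the entries are correctly identified.
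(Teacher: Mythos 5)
The paper contains no proof of this lemma at all---it is quoted from \cite{ms-lufam}, and only Lemma \ref{le:lu1p2} is proved in the appendix---so there is no in-paper argument to compare against line by line; judged on its own, your proposal is correct. It is moreover exactly the argument the paper's framework presupposes: the write-once property of the assignments (\ref{eq:lu1p2}) and (\ref{eq:lu1p5}) stated just before Algorithm \ref{al:lu1p1}, the pivot/non-pivot dichotomy ($C>0$ versus $C=0$, equivalently $\gamma_{R_\ell}=\ell$ versus $\gamma_{R_\ell}<\ell$), the telescoping substitution of the update formulas into the expansion of $(LU)_{i,\ell}$, and the row-position invariant (a row occupying a position $\geq$ the current rank has occupied a position $\geq$ every earlier rank, hence received every earlier update (\ref{eq:lu1p2})) that settles the swap bookkeeping you rightly flag as the one delicate point.
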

We give a slightly modified proof of the following. 
\begin{lemma} 
\label{le:lu1p2} \cite{ms-lufam}
For each $\ell = 0, \ldots, n-1$ we have
\begin{equation}(PA)_{i, q} = (LU)_{i, q}\label{eq:lu1p31}\end{equation} for $i = 0,\ldots, m-1$ and $q = 0, \ldots, \ell$.  If  $\ell \geq \ell_0$ of Lemma \ref{le:lu1p1} then

\begin{equation}(PA)_{p, j} = (LU)_{p, j}\label{eq:lu1p32}\end{equation} for $p = 0,\ldots, R_\ell$ and $j = \ell, \ldots, n-1$.
\end{lemma}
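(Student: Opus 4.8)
The plan is to prove both assertions simultaneously by induction on $\ell$, treating them as a single combined invariant that records a staircase region of agreement between $PA$ and $LU$ which grows as $\ell$ increases. For brevity call the first assertion $(A_\ell)$ — agreement of all rows in every column $q\le\ell$ — and the second $(B_\ell)$ — agreement in rows $0,\ldots,R_\ell$ for every column $j\ge\ell$. The engine of the induction is Lemma \ref{le:lu1p1}, whose per-step equalities (\ref{eq:lu1p9})–(\ref{eq:lu1p11}) supply exactly the entries newly verified when column $\ell$ is processed, while (\ref{eq:lu1p12}) disposes of the columns preceding the first pivot.

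First I would dispatch the range $\ell<\ell_0$: here $(B_\ell)$ is vacuous, and $(A_\ell)$ is immediate from (\ref{eq:lu1p12}), the point being that every pivot column satisfies $\gamma_p\ge\ell_0>q$, so the relevant entries of $U$, hence of $(LU)_{i,q}$, vanish in agreement with $PA$. For the base case $\ell=\ell_0$ one has $R_{\ell_0}=0$ and $\gamma_{R_{\ell_0}}=\ell_0$, so the pivot alternative of Lemma \ref{le:lu1p1} applies: (\ref{eq:lu1p10}) gives agreement throughout column $\ell_0$ in rows $R_{\ell_0},\ldots,m-1$, which together with (\ref{eq:lu1p12}) for the earlier columns establishes $(A_{\ell_0})$, and (\ref{eq:lu1p11}) gives agreement along pivot row $0$ for all $j>\ell_0$, establishing $(B_{\ell_0})$.

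The inductive step from $\ell$ to $\ell'=\ell+1$ (with $\ell\ge\ell_0$) is the heart of the argument and splits along the dichotomy of Lemma \ref{le:lu1p1}: either $R_{\ell'}=R_\ell$ with $\gamma_{R_{\ell'}}<\ell'$ (no new pivot) or $R_{\ell'}=R_\ell+1$ with $\gamma_{R_{\ell'}}=\ell'$ (a new pivot). To obtain $(A_{\ell'})$ it remains, given $(A_\ell)$, to verify column $\ell'$ in every row. The lower portion of that column is supplied by Lemma \ref{le:lu1p1} at $\ell'$ — rows $R_{\ell'}+1,\ldots,m-1$ via (\ref{eq:lu1p9}) in the no-pivot case, or rows $R_{\ell'},\ldots,m-1$ via (\ref{eq:lu1p10}) in the new-pivot case — and the complementary upper rows $0,\ldots,R_\ell$ come from the inductive hypothesis $(B_\ell)$, since those rows were finalized for every column $j\ge\ell$ (in particular $j=\ell'$) when they served as pivot rows at earlier steps; note that when no new pivot occurs $R_{\ell'}=R_\ell$, so $(B_\ell)$ also accounts for row $R_{\ell'}$ itself. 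Then $(B_{\ell'})$ follows: column $\ell'$ is already covered by $(A_{\ell'})$, rows $0,\ldots,R_\ell$ in columns $j>\ell'$ are given by $(B_\ell)$, and in the new-pivot case the single additional row $R_{\ell'}$ in columns $j>\ell'$ is supplied by (\ref{eq:lu1p11}) at $\ell'$.

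I expect the main obstacle to be the bookkeeping that lets $(B_\ell)$ fill in precisely the rows above the current pivot in column $\ell'$ that Lemma \ref{le:lu1p1}'s incremental facts leave untouched; the crux is recognizing that those upper rows are frozen at earlier pivot steps and that $(B_\ell)$ is exactly the persistence statement needed. A secondary point of care is the ordering of the two conclusions within each step — $(A_{\ell'})$ must be proved before $(B_{\ell'})$ so the latter can borrow column $\ell'$ from the former — together with checking that the vacuous index ranges (for instance when $\ell'=n-1$ or $R_{\ell'}=0$) cause no difficulty.
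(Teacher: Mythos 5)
Your proposal is correct and follows essentially the same route as the paper's proof: induction on $\ell$ with the two assertions carried as a joint invariant, the range $\ell<\ell_0$ and the base case $\ell_0$ handled via (\ref{eq:lu1p12}), (\ref{eq:lu1p10}), (\ref{eq:lu1p11}), and the inductive step split along the dichotomy $R_{\ell'}=R_\ell$ versus $R_{\ell'}=R_\ell+1$ from Lemma \ref{le:lu1p1}, with the upper rows of column $\ell'$ supplied by the second (persistence) assertion at $\ell$ and the lower rows by Lemma \ref{le:lu1p1}. The only difference is expository — you place the base case before the inductive step, whereas the paper states it last — which does not change the argument.
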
 
\begin{proof} We have (\ref{eq:lu1p31}) for $\ell < \ell_0$ by Lemma \ref{le:lu1p1}, so suppose $\ell \geq \ell_0$. Then $R_\ell \leq \ell$ and $\gamma_{R_\ell}\leq \ell$, also by Lemma \ref{le:lu1p1} and its proof. Suppose the present lemma is true for some $\ell \geq 0$ and let $\ell^\prime = \ell+1$. If $R_\ell = R_{\ell^\prime}$ then $\gamma_{R_{\ell^\prime}} < \ell^\prime$. By the previous lemma, $(PA)_{i, \ell^\prime} = (LU)_{i, \ell^\prime}$ for $i = R_{\ell^\prime}+1, \ldots, m-1$. By assumption we have $(PA)_{p, j} = (LU)_{p, j}$ for $p=0, \ldots, R_\ell$ and $j=\ell, \ldots, n-1$, which shows that $(PA)_{i,\ell^\prime} = (LU)_{i,\ell^\prime}$ for $i=0, \ldots, R_{\ell^\prime}$ thus $(PA)_{i, \ell^\prime} = (LU)_{i, \ell^\prime}$ for $i = 0, \ldots, m-1$ and 
$(PA)_{p, j} = (LU)_{p, j}$ for $p=0, \ldots, R_{\ell^\prime}$ which is the induction hypothesis for $\ell^\prime$.

On the other hand if $R_\ell < R_{\ell^\prime}$ then $R_{\ell^\prime}=R_\ell + 1$ and $\gamma_{R_{\ell^\prime}} = \ell^\prime$. Then
$(PA)_{i, \ell^\prime} = (LU)_{i, \ell^\prime}$ for $i = R_{\ell^\prime}, \ldots, m-1$. Again by assumption we have 
$(PA)_{p, j} = (LU)_{p, j}$ for $p=0, \ldots, R_\ell$ and $j=\ell, \ldots, n-1$, which shows that 
$(PA)_{i, \ell^\prime} = (LU)_{i, \ell^\prime}$ for $i = 0,\dots,R_{\ell^\prime}-1$ thus
$(PA)_{i, \ell^\prime} = (LU)_{i, \ell^\prime}$ for $i = 0,\dots,m-1$. We have 
$(PA)_{p, j} = (LU)_{p, j}$ for $p=0, \ldots, R_\ell$ and $j=\ell, \ldots, n-1$ by assumption and 
$(PA)_{p, j} = (LU)_{p, j}$ for $p=R_{\ell^\prime}$ and $j=\ell^\prime+1,\ldots,n-1$ by Lemma \ref{le:lu1p1}, and for $j = \ell^\prime$ by choosing $i = R_{\ell^\prime}$ above so that 
$(PA)_{p, j} = (LU)_{p, j}$ for $p=0, \ldots, R_{\ell^\prime}$ and $j=\ell^\prime, \ldots, n-1$. This satisfies the inductive hypothesis.
Now for $\ell_0$, (\ref{eq:lu1p10}) and (\ref{eq:lu1p11}) hold, and we note that $R_{\ell_0} = 0$. Then (\ref{eq:lu1p31}) is true for $q = \ell_0$, and we have (\ref{eq:lu1p31}) for $q<\ell_0$ by (\ref{eq:lu1p12}), so (\ref{eq:lu1p31}) holds for $i=0,\ldots,m-1$ and $q=0, \ldots, \ell_0$. By (\ref{eq:lu1p11}) we have (\ref{eq:lu1p32}) for $j>\ell_0$. By (\ref{eq:lu1p10}) we have (\ref{eq:lu1p32}) for $j=\ell_0$. This instance completes the induction and proof of Lemma \ref{le:lu1p2}.
\end{proof}

\noindent {\it Proof of Theorem \ref{th:lu1p1} } By Lemma \ref{le:lu1p2} it is clear that $PA = LU$. That $L$ is lower trapezoidal (that is, a truncated lower triangular matrix) and $U$ is upper echelon and that both have rank $r$ follows by construction.\qed\linebreak

\noindent {\it Note on machine arithmetic.} Applications of the factorization will depend on use of machine numbers and in practice this means that the comparison (\ref{eq:lu1p3}) can be misleading in the sense that a very small floating point number may either represent a value that is supposed to be zero or a nonzero value that just happens to be small. The value of $C$ in the algorithm is the maximum of the nonzero values $|A_{\rho_i,\ell}|/D_{\rho_i}$ if there is one, or is zero if not. Therefore it is desirable to have a scheme for deciding which of these quantities is nonzero as opposed to being an artifact of machine arithmetic that appears nonzero. A number of methods can be used to separate these possibilities, one of which might be to simply find $C$ as the maximum of the required machine values and to then stipulate that $C>\varepsilon$ (for some well chosen $\varepsilon$) should be interpreted as that (\ref{eq:lu1p3}) is in fact satisfied. This will be referred to as the \textbf{\textit{simple}} method, and we will now discuss some adaptations of that approach. According to the results of \cite{jr-iebipfpa}, any order of evaluation of inner products returns a machine number $\hat \nu$ such that
\begin{equation}|{\hat \nu} - x^Ty| \leq nu|x|^T|y|\label{eq:lu1p13}\end{equation} where $x$ and $y$ are vectors of length $n$ and $u$ is the machine unit.

Using (\ref{eq:lu1p13}) it is easy to show that if $\nu_a$ is the machine-computed value of $|x|^T|y|$, and if 
\begin{equation}|{\hat \nu}| > \frac{nu}{1-nu} \nu_a  \label{eq:lu1p13a}\end{equation}
then the exact arithmetic value of $x^Ty$ is nonzero (where we assume that $nu<1$). It should be noted that if (\ref {eq:lu1p13}) is true, it is also true replacing $n$ by $K$ where $K$ is the number of pairs $(x_i, y_i)$ for which both components are nonzero, provided that the computation of ${\hat \nu}$ ignores pairs for which one component is zero (that is, the floating point representation of zero), so that (\ref {eq:lu1p13}) and (\ref {eq:lu1p13a}) will at least sometimes be overestimates. 
In any case, for machine computations we find $C$ by considering  values of $|A_{\rho_i,\ell}|/D_{\rho_i}$ that are actually nonzero by using  (\ref {eq:lu1p14}), since by (\ref{eq:lu1p13a}) the exact arithmetic value of the right side of the assignment (\ref {eq:lu1p2}) is nonzero if 
\begin{equation}|A_{\rho_i,\ell}| > \phi(K) (|A_{\rho_i,\ell}|+\sum_{0\leq k<r} |A_{\rho_i,\gamma_k}||A_{\rho_k,\ell}|)\label{eq:lu1p14}\end{equation} 
where we define $\phi(a)  = {au}/{(1-au)}$ and where $K$ is determined in the process of evaluating the right side of (\ref {eq:lu1p14}) as the count of nonzero terms in the sum, plus $1$ if $A_{\rho_i,\ell} \neq 0$.
The value to the left of the inequality in (\ref {eq:lu1p14}) is after the assignment (\ref {eq:lu1p2}) (i.e., the computed value),  values to the right of the inequality are values before the assignment and all expressions in (\ref {eq:lu1p14}) represent machine-computed values. 
Inequality  (\ref {eq:lu1p14}) applies for real computations. If $x$ and $y$ are complex, it suffices to consider the real and imaginary parts of $x^*y$ separately. Let $x_R$ and $x_I$ represent the vectors of real and imaginary parts of $x$ and define  $y_R$ and $y_I$ similarly. Then $\Re{x^*y} = x_R^Ty_R + x_I^Ty_I =(x_R, x_I)^T (y_R, y_I)$ where if $a, b$ are vectors of length $n$,  $(a, b)$ represents the vector of length $2n$ consisting of the entries of $a$ followed by the entries of $b$. It follows that if ${\hat \nu}$ is the machine value of $x^*y$ then 
\begin{equation} |\Re ({\hat \nu} - {x^*y})|\leq 2nu (|x_R|, |x_I|)^T (|y_R|, |y_I|)\label{eq:lu1p15}\end{equation} from (\ref {eq:lu1p13}) with similar results for imaginary parts. Letting ${\tilde \nu}_R$ and 
 ${\tilde \nu}_I$ represent the machine computed values of $ (|x_R|, |x_I|)^T (|y_R|, |y_I|)$ and $ (|x_R|, |x_I|)^T (|y_I|, |y_R|)$ respectively, we have similarly to (\ref {eq:lu1p13a}) that if either
\begin{equation}
|\Re {\hat \nu}| > \phi(2n) {\tilde \nu}_R \quad\text{or}\quad |\Im {\hat \nu}| >  \phi(2n) {\tilde \nu}_I \label{eq:lu1p16}
\end{equation} then $x^*y \neq 0$. The alternatives (\ref {eq:lu1p16}) can be used to produce a pair of inequalities similar to (\ref{eq:lu1p14}) for the complex case, namely

\begin{equation}\begin{aligned} 
|{\Cal R}_{\rho_i, \ell}| > \phi_{2r}
(|{\Cal R}_{\rho_i, \ell}|+&\sum_{0\leq k< r}|{\Cal R}_{\rho_i, k}||{\Cal R}_{\rho_k, \ell}| + |{\Cal I}_{\rho_i, k}||{\Cal I}_{\rho_k, \ell}| ) \label{eq:lu1p17}\\
|{\Cal I}_{\rho_i, \ell}| > \phi_{2r}
(|{\Cal I}_{\rho_i, \ell}|+&\sum_{0\leq k< r}|{\Cal R}_{\rho_i, k}||{\Cal I}_{\rho_k, \ell}| + |{\Cal I}_{\rho_i, k}||{\Cal R}_{\rho_k, \ell}| )\\
\end{aligned} \end{equation}
where $\phi_{2r} = \phi(2r+1)$ 
and we define ${\Cal R}_{ij} = \Re A_{ij}$, and ${\Cal I}_{ij} = \Im A_{ij}$. A variation of (\ref {eq:lu1p17}) can also be applied that counts nonzero terms in the sums.
Application of (\ref{eq:lu1p14}) or the variant (\ref {eq:lu1p17})  (both of which will be called \textbf{\textit{fine}} methods) will introduce extra floating point operations. 
One approach to reduce the number of operations is to define $\mu$ as the initial maximum value of $|A_{ij}|$ for $i=0,\dots,m-1$ and $j=0,\dots,n-1$ and $\mu_R$, $\mu_I$ as  ${\max\limits_{i,j}} |\Re A_{ij}|$ and  ${\max\limits_{i,j}} |\Im A_{ij}|$ respectively, and to use (with $\kappa = \min \{m, n\}$)
\begin{equation}|A_{\rho_i,\ell}| > \phi(\kappa +1) (\mu+ \kappa\mu^2)\label{eq:lu1p119}\end{equation}  or in the complex case
\begin{equation}\begin{aligned} 
|\Re A_{\rho_i, \ell}| >& \phi(2\kappa +1)  
(\mu_R + \kappa\mu_R^2 +\kappa \mu_I^2)\cr  
|\Im A_{\rho_i, \ell}| >& \phi(2\kappa +1) 
(\mu_I +2 \kappa\mu_I\mu_R)\cr
\end{aligned} \label{eq:lu1p19}\end{equation}
calculating the value of $\phi$ only once in either case and updating and retaining the entire right hand sides of the respective inequalities (\ref{eq:lu1p119}) or (\ref{eq:lu1p19}) only when $\mu$, $\mu_R$, or $\mu_I$ requires updating after (\ref{eq:lu1p2}) or  (\ref{eq:lu1p5}).
The inequalities  (\ref{eq:lu1p119}) or (\ref{eq:lu1p19}) will be called \textbf{\textit{coarse}} methods because they are overestimates compared to (\ref{eq:lu1p14}) and (\ref {eq:lu1p17}). 

The simple method is typically the fastest, but requires a judicious choice of $\varepsilon$. Numerical experiments for real arithmetic indicate that on a modern processor on average and independent of the size of the matrix $A$, the fine method takes about $1.1$ times as long as the simple method and the coarse method is between the simple and fine methods but very nearly equal to the simple method in execution time.

Finally, it is possible for any of these methods to be wrong (especially the simple with $\varepsilon > 0$ or coarse methods, the fine method being somewhat more adaptive in this situation) by not concluding that $C>0$, if $A$ has many elements that are very small (within a few orders of magnitude of the machine unit) in absolute value. In that case, the algorithm will still produce a factorization, but the rank will be too small and the difference between $PA$ and $LU$ will effectively be those small elements of $A$. Whether or not this matters depends on the application. This situation might possibly be avoided by preparing $A$ ahead of time by scaling or another transformation. One can also do this deliberately, forcing the situation by using a constant larger than the machine unit $u$ in the above methods, in an attempt to filter {\lq noise\rq} from $A$. At the other extreme, there is the simple method with $\varepsilon = 0$ i.e., the original algorithm, which treats every value not equal to floating point zero as nonzero.


\begin{thebibliography}{99.}%

\bibitem {ms-lufam}  
Stromberg M. (2021) LU Factorization of Any Matrix. In: Baumann G. (eds) New Sinc Methods of Numerical Analysis: Festschrift in Honor of Frank Stenger's 80th Birthday. Trends in Mathematics. Birkhäuser, Cham. \verb| https://doi.org/10.1007/978-3-030-49716-3_14|

\bibitem {ms-caqm}  
Stromberg M. A Class of Algorithms for Quadratic Minimization. arXiv [mathNA]. Published online 2023. http://arxiv.org/abs/2302.09139

\bibitem {jr-iebipfpa}  
Jeannerod C.,  Rump S. M. Improved Error Bounds for Inner Products in Floating-Point Arithmetic.  SIAM Journal on Matrix Analysis and Applications {\bf 34(2)}(2013), 338--344

\bibitem {ms-cs}  
Stromberg M. Compact Schemes for $A^+B$, $A^+AB$ and $AA^+B$. arXiv [mathNA]. Published online 2025. https://arxiv.org/abs/2511.12855
\end{thebibliography}
\end{document}